\newtheorem{theorem}{Theorem}[section]
\newtheorem{lemma}[theorem]{Lemma}
\newtheorem{proposition}[theorem]{Proposition}
\theoremstyle{definition}
\theoremstyle{remark}
\newtheorem{remark}[theorem]{Remark}
\numberwithin{equation}{section}
\begin{document}

\title[Critical behavior  for a semilinear parabolic equation...]
{Critical behavior  for a semilinear parabolic equation with  forcing term  depending of time and space}

\author[M. Jleli, T. Kawakami, B. Samet]{Mohamed Jleli, Tatsuki Kawakami, Bessem Samet}

\subjclass[2010]{35B44; 35B33}

\keywords{Finite time blow-up; global solution; critical exponent; inhomogeneous parabolic equation; forcing term depending of time and space}

\begin{abstract}
We investigate  the large-time behavior of the sign-changing solution of 
the inhomogeneous semilinear heat equation $\partial_t u=\Delta u +|u|^p +t^\sigma w(x)$ 
in $(0,T)\times \mathbb{R}^N$, where $N\ge 2$, $p>1$, $\sigma>-1$, $\sigma\ne 0$ and $w\not\equiv 0$.
The novelty of this paper lies in considering a forcing term ($t^\sigma w(x)$) which depends  both of time and space.
We show that there is an exponent $p^*(\sigma)$ which is critical in the following sense: 
the solution of the above problem blows up in finite time when $1<p<p^*(\sigma)$ 
and $\int_{\mathbb{R}^N} w(x)\,dx>0$, 
while global solution exists for suitably small initial data and $w$ belonging to certain Lebesgue spaces 
when $p\geq p^*(\sigma)$.   
Our obtained results show that 
the forcing term induces an interesting phenomenon of discontinuity of the critical exponent $p^*(\sigma)$. 
Namely, we found that $\displaystyle\lim_{\sigma\to 0^-}p^*(\sigma)\ne  \displaystyle \lim_{\sigma\to 0^+}p^*(\sigma)$.
Furthermore, $\displaystyle\lim_{\sigma\to 0^-}p^*(\sigma)$ coincides 
with the critical exponent of the above problem with $\sigma=0$. 
\end{abstract}

\maketitle

\section{Introduction}

In this paper we investigate  the global existence and blow-up of sign-changing solutions of 
the following inhomogeneous parabolic equation

\begin{equation}
\label{eq:1.1}
\left\{
\begin{array}{ll}
\displaystyle{\partial_t u =\Delta u + |u|^p + t^\sigma w(x)} 
&\mbox{in} \quad (0,T)\times \mathbb{R}^N,
\vspace{5pt}
\\
u(0,x)= u_0(x) 
&\mbox{in} \quad\mathbb{R}^N,
\end{array}
\right.
\end{equation}
where $N\ge 2$, $p>1$, $\sigma>-1$, $\sigma\ne 0$ and $w\not\equiv 0$. 
Namely,  we identify the critical exponent for problem \eqref{eq:1.1},
which separates the nonexistence/existence of global-in-time solutions of \eqref{eq:1.1},
and show the discontinuity of this critical exponent at $\sigma=0$.
%We mention below some motivations for studying problems of type \eqref{eq:1.1}. 

In the case $w\equiv 0$ with a nonnegative initial data, problem \eqref{eq:1.1} reduces to 
\begin{equation}
\label{eq:1.1w=0}
\left\{
\begin{array}{ll}
\displaystyle{\partial_t u =\Delta u + u^p } 
&\mbox{in} \quad (0,T)\times \mathbb{R}^N,
\vspace{5pt}
\\
u(0,x)= u_0(x)\ge0
&\mbox{in} \quad\mathbb{R}^N.
\end{array}
\right.
\end{equation}
Fujita \cite{Fujita} established the following results for problem \eqref{eq:1.1w=0}:
\begin{itemize}
\item[(I)] If $1<p<1+2/N$,  then \eqref{eq:1.1w=0}  admits no nontrivial global-in-time solutions.
\item[(II)] If $p>1+2/N$, then  \eqref{eq:1.1w=0} possesses global-in-time solutions for some small $u_0$.
\end{itemize}
Later, it was shown that the borderline case $p=1+2/N$ belongs to the blow-up category
(see e.g. \cite{AW,HA,KO,S,W}).
From above results, the number
\begin{equation}
\label{Fujexp}
p_F:=1+\frac{2}{N}
\end{equation}
is called  the critical Fujita exponent,
which separates the nonexistence/existence of global-in-time solutions of \eqref{eq:1.1w=0}.
In \cite{W}, Weissler also proved that, for the case $p>p_F$, if $\|u_0\|_{L^d}$ is sufficiently small with
$$
d=\frac{N(p-1)}{2}>1,
$$
then  \eqref{eq:1.1w=0} has global positive solutions.

In the case $\sigma=0$, problem \eqref{eq:1.1} reduces to  
\begin{equation}
\label{eq:1.1sigma0}
\left\{
\begin{array}{ll}
\displaystyle{\partial_t u =\Delta u + |u|^p + w(x)} 
&\mbox{in} \quad (0,T)\times \mathbb{R}^N,
\vspace{5pt}
\\
u(0,x)= u_0(x) 
&\mbox{in} \quad\mathbb{R}^N.
\end{array}
\right.
\end{equation}
Problem \eqref{eq:1.1sigma0} was investigated by Bandle et al. \cite{BLZ}. Namely, it was shown that  
\begin{itemize}
\item[(I)] If $1<p<p^*$ and $\int_{\mathbb{R}^N} w(x)\,dx>0$, where
\begin{equation}\label{p*}
p^*=\left\{\begin{array}{lll}
\infty &\mbox{if}& N=1,2, 
\vspace{5pt}
\\
\frac{N}{N-2} &\mbox{if}& N\geq 3,
\end{array}
\right.
\end{equation}
then \eqref{eq:1.1sigma0} has no global solutions.
\item[(II)] If $N\geq 3$ and $p>p^*$, then for any $\delta>0$, 
there exists $\epsilon>0$ such that  \eqref{eq:1.1sigma0} has global solutions provided that 
$$
\max\{|w(x)|, |u_0(x)|\}\le \frac{\epsilon}{\left(1+|x|^{N+\delta}\right)}
$$
regardless of whether or not  $\int_{\mathbb{R}^N} w(x)\,dx>0$. 
\item[(III)] If $N\geq 3$, $p=p^*$,  $\int_{\mathbb{R}^N} w(x)\,dx>0$, 
$w(x)=O(|x|^{-\epsilon-N})$ as $|x|\to \infty$ for some $\epsilon>0$, and either $u\geq 0$ or 
$$
\int_{|x|>R} \frac{w^-(y)}{|x-y|^{N-2}}\,dy =o(|x|^{-N+2})
$$
when $R$ is large, then \eqref{eq:1.1sigma0} has no global solutions. Here, $w^-=\max\{-w,0\}$.   
\end{itemize}

In \cite{Zhang}, Zhang investigated the initial  value problem 
\begin{equation}
\label{eq:1.1sigma0Z}
\left\{
\begin{array}{ll}
\displaystyle{\partial_t u =\Delta u + u^p + w(x)} 
&\mbox{in} \quad (0,\infty)\times M^N,
\vspace{5pt}
\\
u(0,x)= u_0(x) 
&\mbox{in} \quad M^N,
\end{array}
\right.
\end{equation}
where $N\geq 3$, $M^N$ is a non-compact complete Riemannian manifold, 
$\Delta$ is the Laplace-Beltrami operator, 
$u_0\geq 0$ and $w\geq 0$ is a nontrivial $L^1_{loc}$ function.
He proved that 
$$
p_M=\frac{\alpha}{\alpha-2},
$$
where $\alpha>2$ is the decay rate of the fundamental solution of  $\partial_t u=\Delta u$ in $M^N$, 
is the critical Fujita exponent for problem \eqref{eq:1.1sigma0Z}.
(See also \cite{Pinsky}.)
Moreover, it was shown that if the Ricci curvature of $M^N$ is non-negative, 
then $p_M$ belongs to the blow-up case. 
Note that in the case $M^N=\mathbb{R}^N$ ($N\ge 3$), one has $p_M=p^*$, where $p^*$ is given by \eqref{p*}. 
On the other hand, observe that $p^*>p_F$, where $p_F$ is the critical Fujita exponent of \eqref{eq:1.1w=0}
given by \eqref{Fujexp}. 
This means that the additional forcing term $w=w(x)\geq 0$, no matter how small it is, 
has the effect of increasing the critical exponent.  
A similar phenomenon was observed recently for a nonlocal-in-time nonlinear heat equation \cite{JS}.

%In \cite{Pinsky}, Pinsky improved the obtained result in \cite{Zhang} in  the case $M^N=\mathbb{R}^N$. 
%Namely, he replaced $u^p$ in \eqref{eq:1.1sigma0Z} by $a(x)u^p$, 
%where $a(x)$ behaves on the order $|x|^m$ as $|x|\to \infty$.

In all the above cited works, the considered inhomogeneous term depends only of space ($w=w(x)$).  
In this paper 
we investigate, for the first time, the parabolic equation \eqref{eq:1.1} with the forcing term $t^\sigma w(x)$.  
We show that there is an exponent $p^*(\sigma)$ which is critical in the following sense: 
when $1<p<p^*(\sigma)$ and $\int_{\mathbb{R}^N} w(x)\,dx>0$, 
the solution of problem \eqref{eq:1.1} blows up in finite time; 
when $p\ge p^*(\sigma)$, the solution is global for suitably small $u_0$ and $w$.  

As usual, \eqref{eq:1.1} is equivalent in the appropriate setting to
\begin{equation}
\label{eq:1.2}
u(t)=e^{t\Delta} u_0+\int_0^t e^{(t-s)\Delta} \left(|u(s)|^p+s^\sigma w\right)\,ds,\quad 0\le t\le  T,
\end{equation}
where $e^{t\Delta}$ is the heat semigroup on $\mathbb{R}^N$. 
Namely, for $u_0\in C_0(\mathbb R^N)$ and $w\in C_0^\alpha(\mathbb R^N)$ with $\alpha\in(0,1)$, 
one can  see that the solution $u$ of the integral equation \eqref{eq:1.2} satisfies \eqref{eq:1.1} 
in the classical sense (see Proposition~\ref{Proposition:2.1}.). 

Our obtained results are given by the following theorems.  
We discuss separately the cases  $-1<\sigma<0$ and $\sigma>0$.

\begin{theorem}\label{Theorem:1.1} 
Let $N\ge2$, $p>1$ and $\sigma\in(-1,0)$.
Assume $w\in C_0^\alpha(\mathbb{R}^N)\cap L^1(\mathbb{R}^N)$ for some $\alpha\in(0,1)$.
Then the following holds.
\begin{itemize}
\item[(i)] 
Assume
\begin{equation}
\label{eq:1.3}
1<p<\frac{N-2\sigma}{N-2-2\sigma}
\end{equation}
and $\int_{\mathbb{R}^N} w(x)\,dx>0$.
Then for any $u_0\in C_0(\mathbb{R}^N)$,
the solution of \eqref{eq:1.2} blows up in finite time.
\item[(ii)] 
Assume
\begin{equation}
\label{eq:1.4}
p\ge \frac{N-2\sigma}{N-2-2\sigma}.
\end{equation}
Put
\begin{equation}
\label{eq:1.5}
d=\frac{N(p-1)}{2}, \quad  
k=\frac{d}{p(\sigma+1)-\sigma}.
\end{equation}
Then for any $u_0\in C_0(\mathbb R^N)\cap L^d(\mathbb{R}^N)$ and  $w$ with
$\|u_0\|_{L^d(\mathbb{R}^N)}+\|w\|_{L^k(\mathbb{R}^N)}$ is sufficiently small, 
the solution $u$ of \eqref{eq:1.2} exists globally. 
\end{itemize}
\end{theorem}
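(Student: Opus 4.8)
\emph{Plan of proof.}

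\emph{Part (i): finite-time blow-up.} The plan is to argue by contradiction. Suppose \eqref{eq:1.2} has a global (hence classical, by Proposition~\ref{Proposition:2.1}) solution $u$. Fix a parameter $b>0$, large and depending only on $N,p,\sigma,w$, write $G(x,\tau)=(4\pi\tau)^{-N/2}e^{-|x|^2/4\tau}$ for the heat kernel, and introduce
\begin{equation*}
F(t):=\int_{\mathbb R^N}u(t,x)\,G(x,b-t)\,dx,\qquad t\in[0,b).
\end{equation*}
Since $t\mapsto G(\cdot,b-t)$ solves the backward heat equation and $\int_{\mathbb R^N}G(\cdot,b-t)\,dx=1$, differentiating under the integral sign, inserting \eqref{eq:1.1}, integrating by parts (the boundary terms disappear thanks to the Gaussian weight and the at most polynomial spatial growth of $u(t,\cdot)$), and applying Jensen's inequality give
\begin{equation*}
F'(t)=\int_{\mathbb R^N}|u(t,x)|^pG(x,b-t)\,dx+t^\sigma\!\int_{\mathbb R^N}w(x)G(x,b-t)\,dx\ \ge\ |F(t)|^p+t^\sigma g_b(t),
\end{equation*}
where $g_b(t):=\int_{\mathbb R^N}wG(\cdot,b-t)\,dx$. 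Because $\int_{\mathbb R^N}w>0$, dominated convergence gives $\tau^{N/2}\int_{\mathbb R^N}wG(\cdot,\tau)\,dx\to(4\pi)^{-N/2}\int_{\mathbb R^N}w$ as $\tau\to\infty$, so there is $c_0>0$ with $g_b(t)\ge c_0 b^{-N/2}$ for all $t\in[0,\tfrac34 b]$ once $b$ is large.

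\emph{Part (i), continued.} I would then run the estimate over three time windows. On $[0,\tfrac14 b]$ the forcing term is nonnegative, so $F$ is non-decreasing and $F'\ge|F|^p$; comparing the negative part of $F$ with the ODE $h'=-h^p$ produces the \emph{universal} lower bound $F(t)\ge-[(p-1)t]^{-1/(p-1)}$, which holds for \emph{every} $u_0\in C_0(\mathbb R^N)$ — this is exactly what removes any decay hypothesis on the initial datum. In particular $F(\tfrac14 b)\ge-c_1 b^{-1/(p-1)}$. On $[\tfrac14 b,\tfrac12 b]$ I keep $F'(t)\ge t^\sigma g_b(t)\ge c_0 b^{-N/2}t^\sigma$ and integrate, obtaining
\begin{equation*}
F(\tfrac12 b)\ \ge\ -c_1 b^{-1/(p-1)}+c_2\,b^{\,\sigma+1-N/2}.
\end{equation*}
Here the subcriticality hypothesis \eqref{eq:1.3} enters crucially: it is exactly equivalent to $\sigma+1-\tfrac N2>-\tfrac1{p-1}$, so $b^{\sigma+1-N/2}$ dominates $b^{-1/(p-1)}$ and $F(\tfrac12 b)\ge\tfrac12 c_2 b^{\sigma+1-N/2}=:\delta_b>0$ for $b$ large. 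Finally, on $[\tfrac12 b,\tfrac34 b]$ one has $F'\ge F^p$ with $F(\tfrac12 b)=\delta_b>0$, hence $F$ must become infinite by the time $\tfrac12 b+\tfrac1{p-1}\delta_b^{1-p}$; and \eqref{eq:1.3} is likewise equivalent to $(\tfrac N2-\sigma-1)(p-1)<1$, so $\delta_b^{1-p}=O(b^{(N/2-\sigma-1)(p-1)})=o(b)$ and that blow-up time lies inside $[0,b)$ once $b$ is large — contradicting the assumed global existence. I expect the only genuinely delicate point to be the bookkeeping of the three windows and of which constants depend on which data; every analytic ingredient (the backward heat kernel, Jensen, the Gaussian asymptotics of $g_b$, the ODE comparison) is elementary.

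\emph{Part (ii): global existence for small data.} Here I would use Weissler's contraction method in a scaling-critical space. With $d=\tfrac{N(p-1)}2$ as in \eqref{eq:1.5} one has $\tfrac N{2d}=\tfrac1{p-1}$, and \eqref{eq:1.4} forces $p$ above the Fujita exponent, so $d>1$. Choose an auxiliary exponent $q$ from the admissible range (an interval determined by $N,p,\sigma$, and non-empty precisely when \eqref{eq:1.4} holds) with $q>\max\{d,p\}$, $\tfrac N2(\tfrac1k-\tfrac1q)<1$, and $\alpha:=\tfrac N2(\tfrac1d-\tfrac1q)$ satisfying $\alpha p<1$; work in $X:=C((0,\infty);L^d(\mathbb R^N))\cap\{u:\ \|u\|_X:=\sup_{t>0}t^{\alpha}\|u(t)\|_{L^q(\mathbb R^N)}<\infty\}$, and study $\Phi(u)(t):=e^{t\Delta}u_0+\int_0^t e^{(t-s)\Delta}(|u(s)|^p+s^\sigma w)\,ds$. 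Using the $L^r$–$L^q$ smoothing estimates for $e^{\tau\Delta}$, one checks term by term: $\|e^{\cdot\Delta}u_0\|_X\le c\|u_0\|_{L^d}$; the forcing term has $X$-norm $\le c\|w\|_{L^k}$ because $\int_0^t(t-s)^{-\frac N2(\frac1k-\frac1q)}s^\sigma\,ds=c\,t^{1+\sigma-\frac N2(\frac1k-\frac1q)}$ and the exponent equals $-\alpha$ precisely by the definition $k=\tfrac d{p(\sigma+1)-\sigma}$ in \eqref{eq:1.5}; and the Duhamel nonlinearity obeys $\|\int_0^{\cdot}e^{(\cdot-s)\Delta}|u(s)|^p\,ds\|_X\le c\|u\|_X^p$ together with the companion Lipschitz bound $\le c(\|u\|_X^{p-1}+\|v\|_X^{p-1})\|u-v\|_X$, via the pointwise inequality $\bigl||a|^p-|b|^p\bigr|\le p(|a|^{p-1}+|b|^{p-1})|a-b|$, Hölder, and the identity $1-\tfrac N2\tfrac{p-1}q-\alpha p=-\alpha$ (which is nothing but $d=\tfrac{N(p-1)}2$). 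Consequently, when $\|u_0\|_{L^d}+\|w\|_{L^k}$ is small enough, $\Phi$ maps a small ball of $X$ into itself and is a contraction there; Banach's fixed point theorem then yields a global mild solution, and a standard bootstrap using the smoothing of $e^{t\Delta}$ (and Proposition~\ref{Proposition:2.1}) upgrades it to the global solution asserted. The only real work in (ii) is confirming that the admissible range of $q$ is non-empty and that all time integrals converge, which after the exponent arithmetic comes down to $p>1$, $\sigma\in(-1,0)$, and $k\ge1$ — the last being equivalent to \eqref{eq:1.4}.
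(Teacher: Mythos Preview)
Your argument for part~(ii) is essentially the paper's own: the paper also runs a Weissler-type contraction in the space $\{u:\sup_{t>0}t^{\beta}\|u(t)\|_{L^q}\le\delta\}$ with $\beta=\tfrac1{p-1}-\tfrac N{2q}$, which is exactly your $\alpha=\tfrac N2(\tfrac1d-\tfrac1q)$ since $\tfrac N{2d}=\tfrac1{p-1}$; the identities you list for the linear, forcing, and nonlinear pieces are the paper's (4.5), and the admissibility window for $q$ is precisely the paper's (4.2), whose non-emptiness is secured by Lemma~4.1. The subsequent bootstrap to $C([0,\infty),C_0)$ is also the same.

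For part~(i), however, your route is genuinely different. The paper uses the Mitidieri--Pohozaev rescaled test-function method: it multiplies the equation by $\varphi_T(t,x)=\eta(t/T)^{p/(p-1)}\xi(|x|^2/T)^{2p/(p-1)}$ with $\mathrm{supp}\,\eta\subset(0,1)$, integrates by parts, applies Young's inequality to absorb the $|u|$-terms, and arrives at $\int_{\mathbb R^N}w\,dx\le C\,T^{N/2-\sigma-p/(p-1)}\to 0$ under \eqref{eq:1.3}. The choice $\eta(0)=0$ kills the initial-data contribution outright, which is how the paper dispenses with any hypothesis on $u_0$. Your approach instead is the Kaplan heat-kernel average $F(t)=\int u(t,x)G(x,b-t)\,dx$ driven by the ODE $F'\ge|F|^p+t^\sigma g_b(t)$; the sign-changing $u_0$ is neutralised by the universal lower bound $F(t)\ge-[(p-1)t]^{-1/(p-1)}$ coming from $F'\ge|F|^p$, and \eqref{eq:1.3} enters twice, once as $\sigma+1-\tfrac N2>-\tfrac1{p-1}$ to make the forcing dominate, and once as $(\tfrac N2-\sigma-1)(p-1)<1$ to place the ODE blow-up inside $[0,b)$. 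Both arguments are correct. The paper's capacity method is more robust (it needs only a weak-solution identity and no dynamic control of $u$), while your ODE comparison is more constructive and yields an explicit upper bound on the blow-up time in terms of $\int w$; it does, however, lean on Proposition~\ref{Proposition:2.1} for the classical regularity needed to differentiate $F$.
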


\begin{theorem}\label{Theorem:1.2}
Let $N\ge2$, $p>1$ and $\sigma>0$.
Assume $w\in C_0^\alpha(\mathbb{R}^N)\cap L^1(\mathbb{R}^N)$ for some $\alpha\in(0,1)$ and $\int_{\mathbb{R}^N} w(x)\,dx>0$.
Then for any $u_0\in C_0(\mathbb{R}^N)$, 
the solution of \eqref{eq:1.2} blows up in finite time.
\end{theorem}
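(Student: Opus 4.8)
The plan is to argue by contradiction: assuming the solution $u$ of \eqref{eq:1.2} is global, we test the equation against a moving Gaussian weight and derive a scalar differential inequality that must force blow-up in finite time. Let $G_t(x)=(4\pi t)^{-N/2}e^{-|x|^2/(4t)}$, so that $e^{t\Delta}f=G_t*f$ and $\|G_t\|_{L^1(\mathbb R^N)}=1$. Fix a parameter $T>0$ (to be taken large at the end) and set
\[
y_T(t)=\int_{\mathbb R^N}u(t,x)\,G_{T-t}(x)\,dx,\qquad 0\le t\le T-1 .
\]
Applying $\int_{\mathbb R^N}(\cdot)\,G_{T-t}(x)\,dx$ to \eqref{eq:1.2} and using the semigroup identity $e^{(t-s)\Delta}G_{T-t}=G_{T-s}$ together with Fubini's theorem (legitimate since $u(s,\cdot)$ is bounded on compact time intervals and $w\in L^1(\mathbb R^N)$), one obtains the closed identity
\[
y_T(t)=y_T(0)+\int_0^t\!\!\int_{\mathbb R^N}\bigl(|u(s,x)|^p+s^\sigma w(x)\bigr)G_{T-s}(x)\,dx\,ds ,
\]
so that $y_T\in C^1$ and, by Jensen's inequality applied to the probability measure $G_{T-t}(x)\,dx$,
\[
y_T'(t)=\int_{\mathbb R^N}|u(t)|^pG_{T-t}\,dx+t^\sigma\!\int_{\mathbb R^N}wG_{T-t}\,dx\ \ge\ |y_T(t)|^p+t^\sigma\!\int_{\mathbb R^N}wG_{T-t}\,dx .
\]
This requires no regularity beyond \eqref{eq:1.2}, and $|y_T(0)|=\bigl|\int_{\mathbb R^N}u_0G_T\bigr|\le\|u_0\|_{L^\infty(\mathbb R^N)}$.

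Next comes the quantitative lower bound on the forcing contribution. Since $\int_{\mathbb R^N}wG_s\,dx=(4\pi s)^{-N/2}\int_{\mathbb R^N}w(x)e^{-|x|^2/(4s)}\,dx$ and $w\in L^1(\mathbb R^N)$ with $\int_{\mathbb R^N}w\,dx>0$, dominated convergence yields $s_0>0$ and $\kappa>0$ with $\int_{\mathbb R^N}wG_s\,dx\ge\kappa s^{-N/2}$ for all $s\ge s_0$. Taking $T$ large and integrating the differential inequality from $0$ to $\tau:=T-s_0-1$ (dropping the nonnegative term $\int|u|^pG_{T-s}$ and keeping only $s\in[T/2,\tau]$, where $T-s\ge s_0$), we get
\[
y_T(\tau)\ \ge\ -\|u_0\|_{L^\infty}+\kappa\Bigl(\tfrac T2\Bigr)^{\sigma}\!\int_{s_0+1}^{T/2}r^{-N/2}\,dr\ \ge\ -\|u_0\|_{L^\infty}+\kappa\beta\Bigl(\tfrac T2\Bigr)^{\sigma},
\]
where $\beta:=\int_{s_0+1}^{s_0+2}r^{-N/2}\,dr>0$, provided $T/2\ge s_0+2$. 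This is the heart of the matter: because $\sigma>0$ the right-hand side tends to $+\infty$ as $T\to\infty$, and — unlike the case $\sigma=0$ — no restriction linking $\sigma$ and $N$ is needed, because the decisive mass of $t^\sigma\int_{\mathbb R^N}wG_{T-t}\,dx$ sits in the time window $t\in[T/2,\tau]$ adjacent to $T$, not in $t\in[0,T/2]$.

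To conclude, fix $c_*:=(p-1)^{-1/(p-1)}$ and choose $T$ so large that $y_T(\tau)\ge 2c_*$ with $\tau=T-s_0-1$. On $[\tau,T-s_0]$ — an interval of existence if $u$ is global — we have $y_T'\ge y_T^p$ and $y_T\ge y_T(\tau)>0$, hence $\frac{d}{dt}y_T^{1-p}\le 1-p$, so $y_T$ blows up at some time $t^*\le\tau+\frac{y_T(\tau)^{1-p}}{p-1}\le\tau+\frac{(2c_*)^{1-p}}{p-1}=\tau+2^{1-p}<\tau+1=T-s_0$. This contradicts the finiteness of $y_T(t^*)=\int_{\mathbb R^N}u(t^*)G_{T-t^*}\,dx$ for a global solution; hence $u$ blows up in finite time (indeed before $T-s_0$).

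The only genuine difficulty is choosing the right weight and, above all, the right time window over which to harvest the forcing term. Integrating $t^\sigma\int_{\mathbb R^N}wG_{T-t}\,dx$ over $[0,T/2]$ gives only $\sim T^{\sigma+1-N/2}$ — the mechanism responsible for a finite critical exponent when $\sigma=0$ — whereas integrating over the window $[T/2,T-s_0-1]$ just below $T$ gives $\sim T^{\sigma}$, which blows up for every $p>1$ the moment $\sigma>0$. The remaining ingredients — the semigroup identity producing the closed identity for $y_T$, Jensen's inequality normalized by $\|G_{T-t}\|_{L^1}=1$, and the scalar blow-up comparison — are routine.
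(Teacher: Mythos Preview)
Your proof is correct and takes a genuinely different route from the paper. The paper uses the rescaled test-function method of Mitidieri--Pohozaev: it multiplies the equation by a separable cutoff $\psi_{T,R}(t,x)=\eta_T(t)\mu_R(x)$ with \emph{independent} time and space scales, integrates by parts, applies $\varepsilon$-Young to absorb the $|u|^p$ term, and obtains
\[
\int_{\mathbb R^N}w\,dx\le C\Bigl(T^{-\sigma}R^{\,N-\frac{2p}{p-1}}+T^{-\frac{p}{p-1}-\sigma}R^{\,N}\Bigr);
\]
fixing $R$ and sending $T\to\infty$ (this is where $\sigma>0$ is used) gives the contradiction. You instead run a Kaplan-type eigenfunction argument with the moving heat kernel $G_{T-t}$ as weight, obtaining a closed scalar inequality $y_T'\ge|y_T|^p+t^\sigma\!\int wG_{T-t}$ and finishing by ODE comparison. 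The paper's proof is very short in context because it recycles verbatim the machinery already built for Theorem~\ref{Theorem:1.1}(i); its new idea---decoupling $T$ and $R$ so that one may send $T\to\infty$ with $R$ fixed---plays the same role as your new idea of harvesting the forcing on the window $[T/2,\,T-s_0-1]$ (where $t^\sigma$ is large but $G_{T-t}$ has not yet collapsed) rather than on $[0,T/2]$. Your argument is more self-contained (no $\varepsilon$-Young, no test-function calculus), exploits the heat-semigroup identity $e^{(t-s)\Delta}G_{T-t}=G_{T-s}$ directly, and produces an explicit upper bound on the blow-up time; the price is that it is tied to the specific operator $\Delta$ through that Gaussian identity, whereas the test-function method transfers more readily to other operators or geometries.
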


\begin{remark}\label{Remark:1.1}
{\rm(i)} No assumption on the sign of $u_0$  is needed in Theorems {\rm\ref{Theorem:1.1}} 
and {\rm\ref{Theorem:1.2}}.
\vspace{3pt}
\newline
{\rm(ii)} From Theorems {\rm\ref{Theorem:1.1}} and {\rm\ref{Theorem:1.2}},  
one observes that the critical exponent for \eqref{eq:1.1} is given by
$$
p^*(\sigma):=
\left\{
\begin{array}{lll}
{\displaystyle\frac{N-2\sigma}{N-2-2\sigma}} 
&\mbox{if}
& -1<\sigma<0,\vspace{5pt}\\
\infty &\mbox{if}& \sigma>0.
\end{array}
\right.
$$
Observe also that 
when $N\geq 3$, $\displaystyle\lim_{\sigma\to 0^-}p^*(\sigma)\ne \displaystyle\lim_{\sigma\to 0^+}p^*(\sigma)$.
\vspace{3pt}
\newline
{\rm(iii)} Observe that $\displaystyle\lim_{\sigma\to 0^-}p^*(\sigma)=p^*$ (which is  given by \eqref{p*}) 
is the critical exponent for problem \eqref{eq:1.1sigma0}
and also the critical exponent for problem \eqref{eq:1.1sigma0Z} in the case $M^N=\mathbb{R}^N$, $N\geq 3$.
\vspace{3pt}
\newline
{\rm(iv)} In the assertion~{\rm(ii)} of Theorem~{\rm\ref{Theorem:1.1}}, 
one can relax the smallness assumptions for initial data $u_0$ and the inhomogeneous term $w(x)$
from the Lebesgue space $L^r$ to the Lorentz space $L^{r,\infty}$ (the weak $L^r$ space). 
In fact, applying the same argument as in the proof of the assertion~{\rm(ii)} of Theorem~{\rm\ref{Theorem:1.1}}
with the weak Young inequality (see e.g. \cite[(G2)]{FKS}), 
one can get the same conclusion for the case $p>p^*(\sigma)$.
Then we can consider 
$$
|u_0(x)|\sim |x|^{-\frac{N}{d}},\qquad
|w(x)|\sim |x|^{-\frac{N}{k}}
$$
for sufficiently large $x$, which do not belong to $L^d(\mathbb R^N)$ and $L^k(\mathbb R^N)$, respectively. Furthermore, for the critical case $p=p^*(\sigma)$, namely $k=1$, by \eqref{eq:4.3},
one can only relax the smallness assumption for initial data $u_0$.
Therefore it is still open that, for $w$ which behaves like $|x|^{-N}$ for sufficiently large $x$,  
there exists a global-in-time solution of \eqref{eq:1.2} or not.
\end{remark}

The rest of the paper is organized as follows. 
In Section \ref{section:2}, we investigate the local existence properties for equation \eqref{eq:1.1}. 
The  assertion~(i) of Theorem \ref{Theorem:1.1},
as well as Theorem \ref{Theorem:1.2} are established in Section \ref{section:3}. 
The next section is devoted to the proof of the global existence result given by 
the assertion~(ii) of Theorem \ref{Theorem:1.1}.

\section{Local existence}\label{section:2}

We first introduce some notations. 
For any $1\le r\le\infty$, we denote by $\|\cdot\|_{L^r}$ the usual norm of $L^r:=L^r(\mathbb{R}^N)$.  
Let $C_0(\mathbb R^N)$ be the space of continuous functions in $\mathbb R^N$ vanishing at infinity.
For $\alpha\in(0,1)$, let $C^\alpha_0(\mathbb R^N)=C^\alpha(\mathbb R^N)\cap C_0(\mathbb R^N)$.  
By the letter $C$,  we denote generic positive constants and they may have different values also within the same line. 
\vspace{3pt}

Further, let us recall some well known facts about the semigroup $e^{t\Delta}$. 
There exists a positive  constant $c_1$ such that for any $1\le q\le r\le \infty$, one has
\begin{equation}\label{eq:2.1}
\|e^{t\Delta}\varphi\|_{L^r}\le c_1t^{-\frac{N}{2}(\frac{1}{q}-\frac{1}{r})}\|\varphi\|_{L^q},\qquad t>0,
\end{equation}
for any $\varphi\in L^q$. In particular, 
\begin{equation}\label{eq:2.2}
\|e^{t\Delta}\varphi\|_{L^q}\le \|\varphi\|_{L^q},\quad t>0.
\end{equation}
Furthermore, for $\varphi\in C_0(\mathbb R^N)$, it holds that (see e.g. \cite{GGS})
$$
\lim_{t\to0^+}e^{t\Delta}\varphi(x)=\varphi(x),\qquad x\in{\mathbb R^N}.
$$
Applying these estimates, we prove the following local existence result. 

\begin{proposition}\label{Proposition:2.1}
Let $N\ge2$, $p>1$, $\sigma>-1$ with $\sigma\neq 0$. 
Assume $u_0\in C_0(\mathbb{R}^N)$ and $w\in C^\alpha_0(\mathbb R^N)$ for some $\alpha\in(0,1)$.
Then the following holds.
\begin{itemize}
\item[(i)] 
There exists $0<T<\infty$ and a unique solution  $u\in C([0,T],C_0(\mathbb{R}^N))$ of \eqref{eq:1.2}.
Furthermore, the solution $u$ satisfies \eqref{eq:1.1} in the classical sense.
\vspace{3pt}
\item[(ii)] 
The solution $u$ can be extended to a maximal interval $[0,T_{\max})$, where $0<T_{\max}\le \infty$,
and if $T_{\max}<\infty$, then $\lim_{t\to T_{\max}^-}\|u(t)\|_{L^{\infty}}= \infty$.
\vspace{3pt}
\item[(iii)] 
If, in addition,  $u_0,w\in L^r$, where $1\le r<\infty$, then 
$u\in C([0,T_{\max}),C_0(\mathbb{R}^N))\cap C([0,T_{\max}),L^r)$.
\end{itemize}
\end{proposition}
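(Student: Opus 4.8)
The plan is to construct the solution of the integral equation \eqref{eq:1.2} by a contraction argument in $X_T:=C([0,T],C_0(\mathbb R^N))$ with the norm $\|u\|_{X_T}:=\sup_{0\le t\le T}\|u(t)\|_{L^\infty}$, and then to upgrade the mild solution to a classical one and to obtain the continuation and persistence properties by standard semilinear-parabolic arguments. Define $\Phi$ on $X_T$ by $\Phi(u)(t):=e^{t\Delta}u_0+\int_0^t e^{(t-s)\Delta}\big(|u(s)|^p+s^\sigma w\big)\,ds$. Since $u_0,w\in C_0(\mathbb R^N)$ and $s\mapsto|u(s)|^p$ is continuous into $C_0(\mathbb R^N)$ for $u\in X_T$, and since $\sigma>-1$ makes $\int_0^t s^\sigma\,ds=t^{\sigma+1}/(\sigma+1)$ finite, $\Phi$ maps $X_T$ into itself. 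Using \eqref{eq:2.2} with $q=\infty$, the identity $\||u(s)|^p\|_{L^\infty}=\|u(s)\|_{L^\infty}^p$, and the elementary bound $\big||a|^p-|b|^p\big|\le p(|a|+|b|)^{p-1}|a-b|$, I would show that on the ball $B_M=\{\|u\|_{X_T}\le M\}$ with $M:=\|u_0\|_{L^\infty}+1$,
\[
\|\Phi(u)(t)\|_{L^\infty}\le\|u_0\|_{L^\infty}+TM^p+\tfrac{T^{\sigma+1}}{\sigma+1}\|w\|_{L^\infty},\qquad
\|\Phi(u)(t)-\Phi(v)(t)\|_{L^\infty}\le p(2M)^{p-1}\,T\,\|u-v\|_{X_T},
\]
so that for $T=T(\|u_0\|_{L^\infty},\|w\|_{L^\infty},p,\sigma)$ small enough $\Phi$ is a contraction on $B_M$; its fixed point is the sought solution in $C([0,T],C_0(\mathbb R^N))$. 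Uniqueness in all of $C([0,T],C_0(\mathbb R^N))$ then follows from the same Lipschitz bound for $|\cdot|^p$ on bounded sets together with Gronwall's inequality, the forcing terms cancelling in the difference of two solutions.

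For the classical regularity in part (i), I would fix $0<t_0<T$ and, for $t\ge t_0$, split $\int_0^t=\int_0^{t_0/2}+\int_{t_0/2}^t$. The smoothing estimates for $e^{t\Delta}$ (the $L^q$--$L^\infty$ bounds as in \eqref{eq:2.1} and the gradient bounds $\|\nabla e^{t\Delta}\varphi\|_{L^\infty}\le Ct^{-1/2}\|\varphi\|_{L^\infty}$) show that $e^{t\Delta}u_0$ and the first integral are smooth on $[t_0,T]\times\mathbb R^N$; in particular $u(t,\cdot)$ is locally Lipschitz for every $t>0$, with modulus controlled uniformly for $t$ in compact subsets of $(0,T]$, hence so is $x\mapsto|u(t,x)|^p$. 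On $[t_0/2,t]$ the weight $s^\sigma$ is bounded and Lipschitz and $w\in C_0^\alpha(\mathbb R^N)$, so $f(s,x):=|u(s,x)|^p+s^\sigma w(x)$ is bounded and locally Hölder continuous in $x$, uniformly in $s$ on compact subsets of $(0,T]$. Standard parabolic theory for the heat equation with such a forcing term (Schauder estimates and the Duhamel representation; see e.g.\ \cite{GGS}) then gives $u\in C^{2,1}((0,T)\times\mathbb R^N)$ with $\partial_t u=\Delta u+|u|^p+t^\sigma w$; moreover $u(t)\to u_0$ in $C_0(\mathbb R^N)$ as $t\to0^+$ since $e^{t\Delta}u_0\to u_0$ and the Duhamel terms are $O(t)+O(t^{\sigma+1})$ in $L^\infty$. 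This establishes (i).

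For (ii), let $T_{\max}$ be the supremum of existence times; by uniqueness the solutions agree and define $u$ on $[0,T_{\max})$. If $T_{\max}<\infty$ but $\|u(t_n)\|_{L^\infty}\le L$ along some sequence $t_n\to T_{\max}^-$, I would rerun the contraction argument starting from time $t_n$ with datum $u(t_n)$: since one may assume $t_n\ge T_{\max}/2$, the shifted forcing $\tau\mapsto(t_n+\tau)^\sigma w$ is uniformly bounded in $L^\infty$ for $\tau\ge0$, so the existence time produced is bounded below by some $\delta=\delta(L,\|w\|_{L^\infty},T_{\max},p,\sigma)>0$ independent of $n$; for large $n$, $t_n+\delta>T_{\max}$, contradicting maximality. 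Hence $\|u(t)\|_{L^\infty}\to\infty$ as $t\to T_{\max}^-$. For (iii), assume in addition $u_0,w\in L^r$, $1\le r<\infty$, fix $T<T_{\max}$, and put $K:=\sup_{[0,T]}\|u(t)\|_{L^\infty}<\infty$. Applying \eqref{eq:2.2} in $L^r$ to \eqref{eq:1.2} and using $\||u(s)|^p\|_{L^r}\le K^{p-1}\|u(s)\|_{L^r}$ gives $\|u(t)\|_{L^r}\le\|u_0\|_{L^r}+\tfrac{T^{\sigma+1}}{\sigma+1}\|w\|_{L^r}+K^{p-1}\int_0^t\|u(s)\|_{L^r}\,ds$, whence $\sup_{[0,T]}\|u(t)\|_{L^r}<\infty$ by Gronwall; continuity of $t\mapsto u(t)\in L^r$ then follows from strong continuity of $e^{t\Delta}$ on $L^r$ applied to $u_0$ and dominated convergence in the Duhamel integrals, and since $T<T_{\max}$ is arbitrary, $u\in C([0,T_{\max}),L^r)$.

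The routine parts are the fixed-point estimates and the Gronwall arguments. The main obstacle I anticipate is the classical-regularity step together with the continuation argument: one must control the singular-in-time factor $s^\sigma$ with $\sigma\in(-1,0)$ near $s=0$ (which is why the Duhamel integral is split away from $s=0$, and why the lower bound $t_n\ge T_{\max}/2$ on the weight is used in the continuation), and one must verify that $x\mapsto|u(s,x)|^p$ is Hölder continuous with a modulus controlled uniformly on compact time subintervals, which relies on the interior smoothing of the heat semigroup.
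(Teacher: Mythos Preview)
Your treatment of (i) and (ii) is essentially the paper's: a contraction in $C([0,T],C_0(\mathbb R^N))$ with Gronwall for uniqueness, and a restart argument from a late time for the blow-up alternative. Your regularity sketch is in fact more explicit than the paper's, which simply invokes Friedman. One small imprecision in (ii): for $\sigma>0$ the shifted weight $(t_n+\tau)^\sigma$ is not bounded for all $\tau\ge0$; it is bounded uniformly only for $\tau$ in a fixed bounded interval (e.g.\ $\tau\le T_{\max}$), which is all the contraction needs, so the argument still goes through.

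In (iii) there is a genuine, if easily repaired, gap. Your Gronwall bound
\[
\|u(t)\|_{L^r}\le\|u_0\|_{L^r}+\tfrac{T^{\sigma+1}}{\sigma+1}\|w\|_{L^r}+K^{p-1}\int_0^t\|u(s)\|_{L^r}\,ds
\]
presupposes $u(s)\in L^r$, but the construction only gives $u(s)\in C_0(\mathbb R^N)\subset L^\infty$, and $C_0\not\subset L^r$. In the integral equation the term $\int_0^t e^{(t-s)\Delta}|u(s)|^p\,ds$ cannot be placed in $L^r$ from $|u(s)|^p\in L^\infty$ alone, so the inequality is circular as written. The paper sidesteps this by running the contraction from the start in the intersection space $C([0,T],C_0)\cap C([0,T],L^r)$, with a ball carrying simultaneous $L^\infty$ and $L^r$ constraints; the same pointwise estimate $\||u(s)|^p\|_{L^r}\le\|u(s)\|_{L^\infty}^{p-1}\|u(s)\|_{L^r}$ that you use is applied inside the fixed-point space, so membership in $L^r$ is built in rather than inferred a posteriori. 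Once that gives $u\in C([0,T],L^r)$ on some initial interval, your Gronwall-and-continuation argument becomes a valid alternative for propagating the $L^r$ regularity up to $T_{\max}$, but it cannot replace the initial step.
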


\begin{proof}
The proof of this proposition follows from  standard arguments.
For the completeness of this paper, we write the details.
\vspace{3pt}

We first prove the assertion~(i). For the uniqueness of solutions,
let $T>0$ and $u,v\in C([0,T],C_0(\mathbb{R}^N))$ be two solutions of \eqref{eq:1.2}. Since it holds that
\begin{equation}
\label{eq:2.3}
|a^p-b^p|\le p\max\{a^{p-1},b^{p-1}\}|a-b|,\quad a,b\ge 0,
\end{equation}
by \eqref{eq:2.2},  one has
$$
\|u(t)-v(t)\|_{L^\infty} \le  C \int_0^t \|u(s)-v(s)\|_{L^\infty} \,ds,\quad 0\le t\le T.
$$
This together with the Gronwall inequality imply that $u(t,x)= v(t,x)$ for  all $t\in[0,T]$ and  $x\in\mathbb R^N$.

For the existence of solutions,
given $0<T\le 1$, we define the set 
\begin{equation}
\label{eq:2.4}
\mathcal{V}=\left\{u\in C([0,T],C_0(\mathbb{R}^N)):\, \|u\|_{L^\infty((0,T),L^\infty)}\leq 2\delta_\infty(u_0,w)\right\},
\end{equation}
where
$\delta_\infty(u_0,w)=\max\left\{\|u_0\|_{L^\infty},\|w\|_{L^\infty}\right\}$. We endow $\mathcal{V}$ with the distance generated by the norm of $C([0,T],C_0(\mathbb{R}^N))$, that is,
\begin{equation}\label{eq:2.5}
d(u,v)=\|u-v\|_{L^\infty((0,T),L^\infty)},\quad u,v\in \mathcal{V}.
\end{equation}
Given $u\in \mathcal{V}$, let 
\begin{equation}
\label{eq:2.6}
(Fu)(t):=e^{t\Delta} u_0+\int_0^t e^{(t-s)\Delta} \left(|u(s)|^p+s^\sigma w\right)\,ds,\quad 0\leq t\leq T.
\end{equation}
Since $u_0,w\in C_0(\mathbb R^N)$, $\sigma>-1$ and $u\in\mathcal V$,
one can easily verify that $Fu\in C([0,T],C_0(\mathbb{R}^N))$.  
On the other hand,
by \eqref{eq:2.2}, one has
\begin{align*}
\|(Fu)(t)\|_{L^\infty} 
&
\le \|e^{t\Delta} u_0\|_{L^\infty}
+\int_0^t \left\|e^{(t-s)\Delta}|u(s)|^p\right\|_{L^\infty} \,ds
+\int_0^t s^\sigma \left \|e^{(t-s)\Delta} w\right \|_{L^\infty}\,ds
\\
&
\le \|u_0\|_{L^\infty}
+T \|u\|_{L^\infty((0,T),L^\infty)}^p
+\frac{T^{\sigma+1}}{\sigma+1}\|w\|_{L^\infty}
\\
&
\le 
\left(1+\frac{T^{\sigma+1}}{\sigma+1}\right)\delta_\infty(u_0,w)
+2^p\delta_\infty(u_0,w)^pT
\end{align*}
for all $0<t\le T$. This yields 
\begin{equation}
\label{eq:2.7}
\|Fu\|_{L^\infty((0,T),L^\infty)}
\le \left(1+\frac{T^{\sigma+1}}{\sigma+1}+2^p\delta_\infty(u_0,w)^{p-1}T\right)\delta_\infty(u_0,w).
\end{equation}
Let $T>0$ be a sufficiently small constant such that
\begin{equation}
\label{eq:2.8}
\frac{T^{\sigma+1}}{\sigma+1}+2^p\delta_\infty(u_0,w)^{p-1}T\le 1.
\end{equation}
Then, by \eqref{eq:2.7}, one obtains
$$
\|Fu\|_{L^\infty((0,T),L^\infty)}\le 2 \delta_\infty(u_0,w),
$$
which yields $F(\mathcal{V})\subset \mathcal{V}$. Furthermore,
for $u,v\in \mathcal{V}$,
by \eqref{eq:2.2}, \eqref{eq:2.3} and \eqref{eq:2.6}, one has
\begin{equation}
\label{eq:2.9}
\begin{split}
\|(Fu)(t)-(Fv)(t)\|_{L^\infty}
&
\le \int_0^t \left\|e^{(t-s)\Delta} \left(|u(s)|^p-|v(s)|^p\right)\right\|_{L^\infty}\,ds
\\
&
\le \int_0^t \| |u(s)|^p-|v(s)|^p\|_{L^\infty}\,ds
\\
&
\le p \delta_\infty(u_0,w)^{p-1}T\|u-v\|_{L^\infty((0,T),L^\infty)}
\end{split}
\end{equation}
for all $0<t\le T$. By \eqref{eq:2.8} and \eqref{eq:2.9}, one obtains
$$
\|Fu-Fv\|_{L^\infty((0,T),L^\infty)}\leq \frac{p}{2^p} \|u-v\|_{L^\infty((0,T),L^\infty)}.
$$
Since $2^p>p$, under the condition \eqref{eq:2.8},
the self-mapping $F: \mathcal{V}\to \mathcal{V}$ is a contraction. 
Moreover, since $(\mathcal{V},d)$ is a complete metric space,
from the Banach contraction principle, 
it follows that \eqref{eq:1.2} admits  a solution $u\in \mathcal{V}$, 
which is the unique solution to \eqref{eq:1.2} in $C([0,T],C_0(\mathbb{R}^N))$. Furthermore, since $\sigma>-1$ and  under the assumption on the function $w$,  applying similar arguments in regularity theorems for second order parabolic equation (see e.g. \cite[Chapter~1]{F}) to \eqref{eq:1.2},
one see that the solution $u$ satisfies \eqref{eq:1.1} in the classical sense, and the assertion~(i) follows.
\vspace{3pt}

Next we prove the assertion~(ii).
Applying the uniqueness of solutions, 
we see that the solution $u$, which is obtained above, 
can be extended to a maximal interval $[0,T_{\max})$, where
$$
T_{\max}=\sup\left\{t>0:\, \mbox{\eqref{eq:1.2} admits a solution in }C([0,t],C_0(\mathbb{R}^N))\right\}.
$$
Suppose that $T_{\max}<\infty$, and there exists $M>0$ such that 
\begin{equation}
\label{eq:2.10}
\|u(t)\|_{L^\infty}\le M,\quad 0\le t< T_{\max}.
\end{equation}
Let $t_*$ be such that $T_{\max}/2<t_*<T_{\max}$.
For $0<\tau<T_{\max}$, we define the set
$$
\mathcal{W}=\left\{v\in C([0,\tau],C_0(\mathbb{R}^N)):\, \|v\|_{L^\infty((0,\tau),L^\infty)}\le 2 \delta_\infty(M,w)\right\},
$$
where $\delta_\infty(M,w)=\max\left\{M,\|w\|_{L^\infty}\right\}$.
Given $v\in \mathcal{W}$, let
$$
(Gv)(t):=e^{t\Delta}u(t_*)+\int_0^t e^{(t-s)\Delta}|v(s)|^p\,ds+\int_0^t (s+t_*)^\sigma e^{(t-s)\Delta} w \,ds,\quad 0\le t\le \tau.
$$
Similarly to $\mathcal V$ and $Fu$,
we endow $\mathcal{W}$ with the distance $d$, which is defined by \eqref{eq:2.5},
and we have $Gv\in C([0,\tau],C_0(\mathbb{R}^N))$. 
Furthermore, by \eqref{eq:2.2} and \eqref{eq:2.10}, we obtain
\begin{equation}
\label{eq:2.11}
\begin{split}
\|(Gv)(t)\|_{L^\infty} 
&
\le \|e^{t\Delta} u(t_*)\|_{L^\infty}
+\int_0^t \left\|e^{(t-s)\Delta} |v(s)|^p\right\|_{L^\infty} \,ds
\\
& 
+\int_0^t (s+t_*)^\sigma \left \|e^{(t-s)\Delta} w\right \|_{L^\infty}\,ds
\\
&
\le  
\|u(t_*)\|_{L^\infty}+\tau \|v\|_{L^\infty((0,\tau),L^\infty)}^p+\frac{(t+t_*)^{\sigma+1}-t_*^{\sigma+1}}{\sigma+1}\|w\|_{L^\infty}
\\
&
\le  
\delta_\infty(M,w)+2^p\delta_\infty(M,w)^p\tau+\frac{(t+t_*)^{\sigma+1}-t_*^{\sigma+1}}{\sigma+1}\delta_\infty(M,w)
\end{split}
\end{equation}
for all $0\le t\le \tau$. On the other hand, 
applying the mean value theorem,
we see that, for any $0<t\le \tau$, there exists a constant $c_{t,t_*}\in(t_*,t+t_*)$ such that
\begin{equation}
\label{eq:2.12}
\frac{(t+t_*)^{\sigma+1}-t_*^{\sigma+1}}{\sigma+1} =c_{t,t_*}^\sigma t\le c_{t,t_*}^\sigma \tau.
\end{equation}
Since it holds from the definition of $t_*$ that
$$
\frac{T_{\max}}{2}<t_*<c_{t,t_*}<t+t_*<2T_{\max},
$$
by \eqref{eq:2.12}, we have
$$
\frac{(t+t_*)^{\sigma+1}-t_*^{\sigma+1}}{\sigma+1} \le C_\sigma \tau
$$
for all $0<t\le\tau$, where $C_\sigma=T_{\max}^\sigma\max\{2^\sigma,2^{-\sigma}\}$.
This together with \eqref{eq:2.11} implies that
\begin{equation}
\label{eq:2.13}
\|Gv\|_{L^\infty((0,\tau),L^\infty)}
\le \left(1+2^p\delta_\infty(M,w)^{p-1}\tau+C_\sigma \tau\right)\delta_\infty(M,w).
\end{equation}
Let $\tau>0$ be a sufficiently small constant such that
\begin{equation}
\label{eq:2.14}
2^p\delta_\infty(M,w)^{p-1}\tau+C_\sigma \tau\le1.
\end{equation}
Then, by \eqref{eq:2.13}, we obtain
$$
\|Gv\|_{L^\infty((0,\tau),L^\infty)}\le 2\delta_\infty(M,w),
$$
which yields $G(\mathcal{W})\subset \mathcal{W}$. Furthermore, similarly to \eqref{eq:2.9} with \eqref{eq:2.14}, one can see that  under the condition \eqref{eq:2.14}, the self-mapping $G: \mathcal{W}\to \mathcal{W}$ is a contraction. Applying the Banach contraction principle, 
we see that there exists a unique function $v\in \mathcal{W}$ satisfying
$$
v(t)=e^{t\Delta}u(t_*)+\int_0^t e^{(t-s)\Delta}|v(s)|^p\,ds+\int_0^t (s+t_*)^\sigma e^{(t-s)\Delta} w \,ds,\quad 0\le t\le \tau.
$$
For $\max\{T_{\max}/2,T_{\max}-\tau\}<\tilde t<T_{\max}$, let
$$
\widetilde{u}(t)
=\left\{
\begin{array}{lll}
u(t) &\mbox{if}& 0\le t\le \tilde t,\vspace{5pt}\\
v(t-\tilde t) &\mbox{if}& \tilde t\le t\le \tilde t+\tau.
\end{array}
\right.
$$
Then we observe that $\widetilde{u}\in C([0,\tilde t+\tau],C_0(\mathbb{R}^N))$ is a solution to \eqref{eq:1.2}
and $\tilde t+\tau>T_{\max}$, which contradicts the definition of $T_{\max}$. 
Hence, we see that if $T_{\max}<\infty$, then $\lim_{t\to T_{\max}^-}\|u(t)\|_{L^\infty}= \infty$,
and the assertion~(ii) follows.
\vspace{3pt}

Finally we prove the assertion~(iii).
Instead of the functional space $\mathcal{V}$ given by \eqref{eq:2.4}, 
we define the set
\begin{align*}
&
\mathcal{V}_r=\left\{u\in C([0,T],C_0(\mathbb{R}^N))\cap C([0,T],L^r):\right.
\\
&
\hspace{3cm}
\left. \|u\|_{L^\infty((0,T),L^\infty)}\le 2 \delta_\infty(u_0,w),\quad
\|u\|_{L^\infty((0,T),L^r)}\le 2 \delta_r(u_0,w)
\right\},
\end{align*}
where $\delta_r(u_0,w)=\max\left\{\|u_0\|_{L^r},\|w\|_{L^r}\right\}$.
We endow $\mathcal V_r$ with the distance
$$
d_r(u,v)=\|u-v\|_{L^\infty((0,T),L^\infty)}+\|u-v\|_{L^\infty((0,T),L^r)},
\qquad u,v\in \mathcal{V}_r.
$$
Since it holds that $\||u(t)|^p\|_{L^r}\le \|u(t)\|^{p-1}_{L^\infty}\|u(t)\|_{L^r}$,
applying same argument as in the proof of the assertion~(i),
we obtains a unique solution $u$ in $\mathcal{V}_r$,
and we see that $u\in C([0,T_{\max}),C_0(\mathbb{R}^N))\cap C([0,T_{\max}),L^r)$.
Thus the assertion~(iii) follows.
\end{proof} 

\section{Blow-up of solutions}\label{section:3}
%%%%%%%%%%%%%%%%%%%%%%%%
%%%%%%%%%%%%%%%%%%%%%%%%

In order to prove the blow-up results given by Theorems \ref{Theorem:1.1} and \ref{Theorem:1.2}, we use the well-known rescaled test function method (see \cite{MP}).
\vspace{5pt}

\begin{proof}[Proof of the assertion~(i) of Theorem \ref{Theorem:1.1}]

We argue by contradiction.
Suppose that $T_{\max}=\infty$, i.e. $u\in C([0,\infty),C_0(\mathbb{R}^N))$ is a global solution of \eqref{eq:1.2}.  We need to introduce  two cut-off functions. Let $\xi,\eta\in C^\infty([0,\infty))$ satisfy
\begin{equation}
\label{eq:3.1}
0\le \xi\le 1;\,\,\, \xi\equiv 1 \mbox{ in } [0,1];\,\,\, \xi\equiv 0 \mbox{ in } [2,\infty)
\end{equation}
and
\begin{equation}\label{etafunction}
\eta\geq 0,\,\,\, \eta\not\equiv 0,\,\,\, \mbox{supp}(\eta)\subset (0,1).
\end{equation}
For sufficiently large positive constant $T$, we put 
$$
\varphi_T(t,x)=\eta_T(t) \mu_T(x),\quad (t,x)\in [0,T]\times \mathbb{R}^N,
$$
where
\begin{equation}\label{eq:3.2}
\eta_T(t)=\eta\left(\frac{t}{T}\right)^{\frac{p}{p-1}},\quad  \mu_T(x)=\xi\left(\frac{|x|^2}{T}\right)^{\frac{2p}{p-1}}.
\end{equation}
By \eqref{eq:3.1} and \eqref{eq:3.2}, it can be easily seen that
\begin{equation}\label{eq:3.3}
|\Delta \mu_T(x)|\le \frac{C}{T}\xi\left(\frac{|x|^2}{T}\right)^{\frac{2}{p-1}},\qquad x\in\mathbb R^N.
\end{equation}
Since the solution $u$ of \eqref{eq:1.2} satisfies \eqref{eq:1.1} in the classical sense, multiplying \eqref{eq:1.1} by $\varphi=\varphi_T$,  and integrating by parts over $(0,T)\times\mathbb R^N$,
we obtain
\begin{align*}
&
\int_0^T\int_{\mathbb{R}^N} |u|^p\varphi_T \,dx\,dt
+\int_0^T\int_{\mathbb{R}^N} t^\sigma w(x) \varphi_T\,dx\,dt 
+\int_{\mathbb{R}^N} u_0(x)\varphi_T(0,x)\,dx \\
&
= -\int_0^T\int_{\mathbb{R}^N} u\Delta \varphi_T\,dx\,dt
-\int_0^T\int_{\mathbb{R}^N} u\partial_t\varphi_T\,dx\,dt.
\end{align*}
On the other hand, by \eqref{etafunction}, it holds that 
$$
 \int_{\mathbb{R}^N} u_0(x)\varphi_T(0,x)\,dx =\eta_T(0)\int_{\mathbb{R}^N}u_0(x) \mu_T(x)\,dx =0.
$$
Therefore, we deduce that 
\begin{equation}\label{eq:3.4}
\begin{split}
&
\int_0^T\int_{\mathbb{R}^N} |u|^p\varphi_T \,dx\,dt
+\int_0^T\int_{\mathbb{R}^N} t^\sigma w(x) \varphi_T\,dx\,dt 
\\
&
\le \int_0^T\int_{\mathbb{R}^N} |u| |\Delta \varphi_T|\,dx\,dt
+\int_0^T\int_{\mathbb{R}^N} |u| |\partial_t\varphi_T|\,dx\,dt.
\end{split}
\end{equation}
We claim that 
\begin{equation}
\label{eq:3.5}
\int_0^T\int_{\mathbb{R}^N} t^\sigma w(x) \varphi_T\,dx\,dt \ge C T^{\sigma+1} \int_{\mathbb{R}^N} w(x)\,dx.
\end{equation}
Indeed, we have
\begin{equation}\label{eq:3.6}
\int_0^T\int_{\mathbb{R}^N} t^\sigma w(x) \varphi_T\,dx\,dt
=\left(\int_0^T t^\sigma \eta\left(\frac{t}{T}\right)^{\frac{p}{p-1}}\,dt\right)
\left(\int_{\mathbb{R}^N} w(x)\xi\left(\frac{|x|^2}{T}\right)^{\frac{2p}{p-1}}\,dx\right).
\end{equation}
From the conditions imposed on the function $w$, and  by the dominated convergence theorem,  we obtain
$$
\lim_{T\to \infty} \int_{\mathbb{R}^N} w(x) 
\xi\left(\frac{|x|^2}{T}\right)^{\frac{2p}{p-1}}\,dx
=\int_{\mathbb{R}^N} w(x) \,dx>0.
$$
This implies  that, for a sufficiently large $T>0$, we have
\begin{equation}\label{eq:3.7}
\int_{\mathbb{R}^N} w(x) 
\xi\left(\frac{|x|^2}{T}\right)^{\frac{2p}{p-1}}\,dx\geq \frac{1}{2} \int_{\mathbb{R}^N} w(x) \,dx.
\end{equation}
On the other hand, we have
\begin{equation}\label{eq:3.8}
\int_0^T t^\sigma \eta\left(\frac{t}{T}\right)^{\frac{p}{p-1}}\,dt
=T^{\sigma+1} \int_0^1 s^\sigma \eta(s)^{\frac{p}{p-1}}\,ds.
\end{equation}
Using \eqref{etafunction}, \eqref{eq:3.6}, \eqref{eq:3.7} and \eqref{eq:3.8},  \eqref{eq:3.5} follows. 

Next, applying the $\varepsilon$-Young inequality with $\varepsilon=\frac{1}{2}$, 
we obtain
\begin{equation}
\label{eq:3.9}
\int_0^T\int_{\mathbb{R}^N} |u| |\Delta \varphi_T|\,dx\,dt
\le \frac{1}{2} \int_0^T\int_{\mathbb{R}^N} |u|^p\varphi_T \,dx\,dt+C I_1(T)
\end{equation}
and
\begin{equation}
\label{eq:3.10}
\int_0^T\int_{\mathbb{R}^N} |u| |\partial_t \varphi_T|\,dx\,dt
\le \frac{1}{2} \int_0^T\int_{\mathbb{R}^N} |u|^p\varphi_T \,dx\,dt+CI_2(T), 
\end{equation}
where
$$
I_1(T):=\int_0^T\int_{\mathbb{R}^N} \varphi_T^{\frac{-1}{p-1}}|\Delta \varphi_T|^{\frac{p}{p-1}} \,dx\,dt,
\qquad
I_2(T):=\int_0^T\int_{\mathbb{R}^N} \varphi_T^{\frac{-1}{p-1}}|\partial_t\varphi_T|^{\frac{p}{p-1}} \,dx\,dt.
$$
By the definition of $\varphi_T$ with \eqref{eq:3.2} and \eqref{eq:3.3}, we obtain
\begin{equation*}
\begin{split}
&
I_1(T)= \left(\int_0^T \eta_T(t)\,dt\right) \left(\int_{\mathbb{R}^N} \mu_T(x)^{\frac{-1}{p-1}} |\Delta \mu_T(x)|^{\frac{p}{p-1}}\,dx\right)\\
&
\le 
CT \int_{\mathbb{R}^N} \mu_T(x)^{\frac{-1}{p-1}} |\Delta \mu_T(x)|^{\frac{p}{p-1}}\,dx
\le
C T^{1+\frac{N}{2}-\frac{p}{p-1}} \int_{1<|y|< \sqrt{2}} 1\,dy,
\end{split}
\end{equation*}
which yields
\begin{equation}\label{eq:3.11}
I_1(T)\leq C T^{1+\frac{N}{2}-\frac{p}{p-1}}.
\end{equation}
Similarly, we have
\begin{equation}\label{eq:3.12}
I_2(T)
= \left(\int_0^T \eta_T(t)^{\frac{-1}{p-1}} |\eta_T'(t)|^{\frac{p}{p-1}}\,dt\right) \left(\int_{\mathbb{R}^N} \mu_T(x)\,dx\right).
\end{equation}
On the other hand,  we have
\begin{equation}\label{eq:intmu}
\int_{\mathbb{R}^N} \mu_T(x)\,dx=T^{\frac{N}{2}} \int_{|y|<\sqrt 2} \xi(|y|^2)^{\frac{2p}{p-1}} \,dy=C T^{\frac{N}{2}} 
\end{equation}
and
\begin{equation}\label{eq:inteta}
\int_0^T \eta_T(t)^{\frac{-1}{p-1}} |\eta_T'(t)|^{\frac{p}{p-1}}\,dt=\lambda^{\frac{p}{p-1}} T^{1-\frac{p}{p-1}} \int_0^1 |\eta'(s)|^{\frac{p}{p-1}}\,ds.
\end{equation}
Therefore, using \eqref{eq:3.12}, \eqref{eq:intmu} and \eqref{eq:inteta}, we obtain
\begin{equation}\label{eq:3.12bis}
I_2(T)\leq C T^{1+\frac{N}{2}-\frac{p}{p-1}}.
\end{equation}
Hence, combining \eqref{eq:3.4}, \eqref{eq:3.5}, \eqref{eq:3.9}, \eqref{eq:3.10}, \eqref{eq:3.11} and \eqref{eq:3.12bis}, we see that
$$
T^{\sigma+1}\int_{\mathbb{R}^N} w(x) \,dx
\le I_1(T)+I_2(T)
\le CT^{1+\frac{N}{2}-\frac{p}{p-1}},
$$
which yields
\begin{equation}
\label{eq:3.13}
\int_{\mathbb{R}^N} w(x) \,dx\le C T^{\frac{N}{2}-\sigma-\frac{p}{p-1}}.
\end{equation}
Passing to the limit as $T\to \infty$ in \eqref{eq:3.13} with \eqref{eq:1.3}, we obtain
$$
\int_{\mathbb{R}^N} w(x) \,dx\le 0,
$$
which contradicts the fact that $\int_{\mathbb{R}^N} w(x) \,dx>0$. 
This completes the proof of the assertion~(i) of Theorem \ref{Theorem:1.1}.
\end{proof}

\begin{proof}[Proof of Theorem \ref{Theorem:1.2}]
As previously, suppose that $u\in C([0,\infty),C_0(\mathbb{R}^N))$ is a global solution of \eqref{eq:1.2}.
For sufficiently large positive constants $T$ and $R$, we put 
$$
\psi_{T,R}(t,x)=\eta_T(t) \mu_R(x),\quad (t,x)\in [0,T]\times \mathbb{R}^N,
$$
where $\eta_T$ is given by \eqref{eq:3.2}, 
$$
\mu_R(x)=\xi\left(\frac{|x|^2}{R^2}\right)^{\frac{2p}{p-1}},\quad x\in \mathbb{R}^N
$$
and 
$\xi\in C^\infty([0,\infty))$ is  a cut-off function satisfying \eqref{eq:3.1}. Replacing $\varphi_T$ with $\psi_{T,R}$
and
applying same arguments as in the proof of the assertion~(i) of Theorem~\ref{Theorem:1.1}, we obatin
\begin{equation}
\label{eq:3.14}
\int_{\mathbb{R}^N} w(x) \,dx
\le C \left(T^{-\sigma}R^{N-\frac{2p}{p-1}}+T^{-\frac{p}{p-1}-\sigma}R^N\right).
\end{equation}
Fixing $R$ and passing to the limit as $T\to \infty$ in \eqref{eq:3.14}, since $\sigma>0$, we obtain
$$
\int_{\mathbb{R}^N} w(x) \,dx\le 0,
$$
which contradicts the fact that $\int_{\mathbb{R}^N} w(x) \,dx>0$,
and the proof of Theorem \ref{Theorem:1.2} is complete.
\end{proof}

\section{Global existence}\label{section:4}

The following lemma will be used in the proof of the global existence part of Theorem \ref{Theorem:1.1}.

\begin{lemma}\label{Lemma:4.1}
Let $N\geq 2$ and $-1<\sigma<0$. Assume \eqref{eq:1.4}.
Then 
\begin{equation}
\label{eq:4.1}
2\sigma p^2-(N+2\sigma-2)p+N<0.
\end{equation}
\end{lemma}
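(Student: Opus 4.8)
The plan is to regard $f(p):=2\sigma p^{2}-(N+2\sigma-2)p+N$ as a downward-opening parabola in $p$ — its leading coefficient $2\sigma$ is negative since $\sigma<0$ — and to show that the threshold $p^{*}:=\frac{N-2\sigma}{N-2-2\sigma}$ appearing in \eqref{eq:1.4} lies to the right of the vertex of $f$ and satisfies $f(p^{*})<0$. Monotonicity then yields $f(p)<0$ for every $p\ge p^{*}$, which is \eqref{eq:4.1}.

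First I would record three elementary facts. (a) The vertex of $f$ is at $p_{v}=\frac{N+2\sigma-2}{4\sigma}$, and $p_{v}<1$: multiplying through by $4\sigma<0$ reverses the inequality and reduces it to $N-2-2\sigma>0$, which holds because $N\ge2$ and $\sigma<0$. Hence $f$ is strictly decreasing on $[1,\infty)$. (b) Setting $a=N-2\sigma$ and $b=N-2-2\sigma$, one has $b>0$ and $a-b=2$, so $p^{*}=a/b=1+2/b>1$. (c) Clearing denominators, $b^{2}f(p^{*})=2\sigma a^{2}-(N+2\sigma-2)ab+Nb^{2}$; substituting $a=b+2$ and $N=b+2+2\sigma$ and expanding, the terms of degree $3$, $2$ and $1$ in $b$ all cancel, leaving
$$
b^{2}f(p^{*})=8\sigma,\qquad\text{i.e.}\qquad f(p^{*})=\frac{8\sigma}{(N-2-2\sigma)^{2}}<0 .
$$

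Combining (a)--(c): by (b) and (a) we have $p^{*}>1>p_{v}$, so $f$ is strictly decreasing on $[p^{*},\infty)$, whence for every $p$ satisfying \eqref{eq:1.4},
$$
2\sigma p^{2}-(N+2\sigma-2)p+N=f(p)\le f(p^{*})=\frac{8\sigma}{(N-2-2\sigma)^{2}}<0,
$$
which is \eqref{eq:4.1}. (One can also bypass monotonicity: the discriminant $(N+2\sigma-2)^{2}-8\sigma N$ is positive because $-8\sigma N>0$, so $f$ has real roots $r_{1}<r_{2}$; since $f(1)=2>0$, the point $1$ lies strictly between them, and $f(p^{*})<0$ together with $p^{*}>1>r_{1}$ forces $p^{*}>r_{2}$, so $[p^{*},\infty)\subset(r_{2},\infty)$, where $f<0$.) I expect no genuine obstacle; the only step demanding a little care is verifying the cancellation $b^{2}f(p^{*})=8\sigma$, and that is a routine polynomial expansion.
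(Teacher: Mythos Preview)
Your argument is correct. You treat the expression as a quadratic in $p$ with $N$ fixed, locate the vertex, use the downward-opening shape to get monotonicity on $[1,\infty)$, and then evaluate at the threshold $p^*$; the computation $b^2 f(p^*)=8\sigma$ checks out.

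The paper takes a different and somewhat shorter route: it fixes $p$ and views the same expression as a function of $N$ (or rather of a real variable $s$), namely $\varrho(s)=2\sigma p^2-(s+2\sigma-2)p+s$. This is \emph{linear} in $s$ with slope $1-p<0$, so no vertex analysis is needed; one simply observes that assumption \eqref{eq:1.4} is equivalent to $N\ge s^*:=2\sigma+\tfrac{2p}{p-1}$, and a direct substitution gives $\varrho(s^*)=2\sigma(p-1)^2<0$. Your approach has the merit of producing the explicit bound $f(p)\le 8\sigma/(N-2-2\sigma)^2$, while the paper's change of variable trades a quadratic for a linear function and avoids the polynomial cancellation you flagged as the one delicate step.
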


\begin{proof}
Assume \eqref{eq:1.4}. Then
$$
N\ge s^*:=2\sigma+\frac{2p}{p-1}.
$$
Consider the function
$$
\varrho(s)=2\sigma p^2-(s+2\sigma-2)p+s,\quad s\ge s^*.
$$
Since it follows from $p>1$ that $\varrho$ is a decreasing function, 
we obtain $\varrho(s)\le \varrho(s^*)$ for $s\ge s^*$.
On the other hand, we have $\varrho(s^*)=2 \sigma (p-1)^2<0$.
Therefore we see that $\varrho(s)<0$ for $s\ge s^*$,
and taking $s=N$ in this inequality, \eqref{eq:4.1} follows.
\end{proof}

\begin{proof}[Proof of the assertion~(ii) of Theorem \ref{Theorem:1.1}]
The proof is inspired by that of \cite[Theorem 1.1]{Caz}. By \eqref{eq:1.4} and \eqref{eq:4.1}, we can take a positive constant $q$ satisfying
\begin{equation}
\label{eq:4.2}
\frac{2}{N}\max\left\{\frac{1}{p(p-1)},\sigma+\frac{1}{p-1}\right\}<\frac{1}{q}<\min\left\{\frac{2}{N(p-1)},\frac{1}{p}\right\}.
\end{equation}
Furthermore, it follows that 
\begin{equation}
\label{eq:4.3}
q>d>k\ge 1, 
\end{equation}
where $d$ and $k$ are given by \eqref{eq:1.5}. Let 
$$
\beta=\frac{1}{p-1}-\frac{N}{2q}.
$$
Then we verify easily that 
\begin{equation}
\label{eq:4.4}
\beta >0,\qquad\beta p<1
\end{equation}
and
\begin{equation}\label{eq:4.5}
\beta-\frac{N}{2}\left(\frac{1}{d}-\frac{1}{q}\right)
=\beta(1-p)+1 -\frac{N}{2q}(p-1)
=\beta-\frac{N}{2}\left(\frac{1}{k}-\frac{1}{q}\right)+\sigma+1=0.
\end{equation}
Let $\delta$ be a sufficiently small positive constant.
We define the set
$$
\Xi=\left\{u\in L^\infty((0,\infty),L^q(\mathbb{R}^N)):\, \sup_{t>0} t^\beta \|u(t)\|_{L^q}\le \delta\right\}.
$$
We endow $\Xi$ with the distance
$$
d(u,v)=\sup_{t>0} t^\beta \|u(t)-v(t)\|_{L^q},\quad u,v\in \Xi.
$$
Then $(\Xi,d)$ is a complete metric space. 
Given $u\in \Xi$, let 
\begin{equation}
\label{eq:4.6}
(Su)(t):=e^{t\Delta}u_0+\int_0^t e^{(t-s)\Delta} |u(s)|^p\,ds +\int_0^t s^\sigma e^{(t-s)\Delta} w\,ds,\quad t\ge 0.
\end{equation}
Since $u_0\in L^d$, 
by \eqref{eq:2.1} and \eqref{eq:4.5} we have
\begin{equation}
\label{eq:4.7}
\|e^{t\Delta}u_0\|_{L^q}
\le c_1 t^{-\frac{N}{2}\left(\frac{1}{d}-\frac{1}{q}\right)} \|u_0\|_{L^d}
=c_1t^{-\beta} \|u_0\|_{L^d},\quad t>0,
\end{equation}
where $c_1$ is the constant given in \eqref{eq:2.1}. Furthermore, since it follows form \eqref{eq:4.2} that $q>p$,
by \eqref{eq:1.4} and \eqref{eq:4.5}, we obtain
\begin{equation}
\label{eq:4.8}
\begin{split}
\int_0^t \left\|e^{(t-s)\Delta} |u(s)|^p\right\|_{L^q}\,ds 
&\le c_1 \int_0^t (t-s)^{-\frac{N}{2q}(p-1)} \| |u(s)|^p\|_{L^{\frac{q}{p}}}\,ds 
\\
&
\le c_1\delta^p \int_0^t s^{-\beta p}(t-s)^{-\frac{N}{2q}(p-1)}\,ds 
\\
&
= c_1\delta^p  t^{-\frac{N}{2q}(p-1)+1-\beta p} B\left(1-\beta p,1-\frac{N}{2q}(p-1)\right)
\\
&
= c_1C \delta^p  t^{-\beta},\quad t> 0,
\end{split}
\end{equation}
where $B$ denotes the beta function. We note that by \eqref{eq:4.2} and \eqref{eq:4.4}, $B\left(1-\beta p,1-\frac{N}{2q}(p-1)\right)$ is well-defined.  Similarly, it holds that
\begin{equation}
\label{eq:4.9}
\begin{split}
\int_0^t s^\sigma \left\|e^{(t-s)\Delta} w\right\|_{L^q}\,ds 
&
\le  c_1 \|w\|_{L^k}\int_0^t s^\sigma (t-s)^{-\frac{N}{2}\left(\frac{1}{k}-\frac{1}{q}\right)} \,ds 
\\
&
= c_1 t^{-\frac{N}{2}\left(\frac{1}{k}-\frac{1}{q}\right)+1+\sigma} 
B\left(\sigma+1, 1-\frac{N}{2}\left(\frac{1}{k}-\frac{1}{q}\right)\right) \|w\|_{L^k}
\\
&
= c_1C t^{-\beta}\|w\|_{L^k},\quad t>0.
\end{split}
\end{equation}
We note again that by $\sigma>-1$ and \eqref{eq:4.2}, 
$B\left(\sigma+1, 1-\frac{N}{2}\left(\frac{1}{k}-\frac{1}{q}\right)\right)$ is well-defined. Then, by \eqref{eq:4.6}, \eqref{eq:4.7}, \eqref{eq:4.8} and \eqref{eq:4.9}, we have
$$
t^\beta \|(Su)(t)\|_{L^q} \le C_*\left(\|u_0\|_{L^d}+\delta^p+\|w\|_{L^k}\right),\quad t>0,
$$
where $C_*>0$ is a constant, independent of $\delta$. Therefore, we can chose a sufficiently small positive constant $\delta$ satisfying
$$
0<\delta\le \left(\frac{1}{2C_*}\right)^{\frac{1}{p-1}},
$$
and if the initial data $u_0$ and the inhomogeneous term $w$ satisfy
$$
\|u_0\|_{L^d}+\|w\|_{L^k}\le \frac{\delta}{2C_*},
$$
we get
$$
C_*\left(\|u_0\|_{L^d}+\delta^p+\|w\|_{L^k}\right)\le \delta.
$$
This yields $S(\Xi)\subset \Xi$. Furthermore, assuming $\|u_0\|_{L^d}+\|w\|_{L^k}$ and $\delta$ small enough if necessary, and applying similar arguments as above, we see that the self-mapping $S: \Xi\to \Xi$ is a contraction, so it admits a fixed point $u\in L^\infty((0,\infty),L^q)$, which solves \eqref{eq:1.2}. We claim that 
\begin{equation}
\label{eq:4.10}
u\in C([0,\infty),C_0(\mathbb{R}^N)).
\end{equation}
In order to prove our claim,  we first show that for $T>0$ small enough, 
$u\in C([0,T],C_0(\mathbb{R}^N))$.  For any $T>0$ (small enough),
we observe that the above argument yields uniqueness in
$$
\Xi_T=\left\{u\in L^\infty((0,T),L^q):\, \sup_{0<t<T} t^\beta \|u(t)\|_{L^q}\le \delta\right\}.
$$
Let $\widetilde{u}$ be the local solution to \eqref{eq:1.2} obtained by Proposition~\ref{Proposition:2.1}.
Since it follows from \eqref{eq:4.3} that $u_0,w\in L^q$,
by Proposition~\ref{Proposition:2.1}~(iii)
we have $\widetilde{u}\in C([0,T_{\max}),C_0(\mathbb{R}^N))\cap C([0,T_{\max}),L^q)$. Then, by the boundedness of $\|\tilde u(t)\|_{L^q}$,
for a sufficiently small $T>0$,
we see that $\sup_{0<t<T} t^\beta \|\widetilde{u}(t)\|_{L^q}\le \delta$. 
Hence, by the uniqueness of solutions, we deduce that $u=\widetilde{u}$ in $[0,T]$, so that 
\begin{equation}
\label{eq:4.11}
u\in C([0,T],C_0(\mathbb{R}^N)).
\end{equation}
Next, applying a bootstrap argument, we show that $u\in C([T,\infty),C_0(\mathbb{R}^N))$. Indeed, for $t>T$, it holds that
\begin{align*}
u(t)-e^{t\Delta}u_0-\int_0^t s^\sigma e^{(t-s)\Delta}w\,ds
&=
\int_0^T  e^{(t-s)\Delta} |u(s)|^p\,ds +\int_T^t  e^{(t-s)\Delta} |u(s)|^p\,ds
\\
&:= J_1(t)+J_2(t).
\end{align*}
Since $u\in C([0,T],C_0(\mathbb{R}^N))$, 
we can easily show that $J_1\in C([T,\infty),C_0(\mathbb{R}^N))$.
Furthermore, by the above calculations used to construct the fixed point,
we have $J_1\in C([T,\infty),L^q)$. On the other hand,
by \eqref{eq:4.2}, we see that $q>N(p-1)/2$,
and we can take a constant $r\in(q,\infty]$ such that
$$
\frac{N}{2}\left(\frac{p}{q}-\frac{1}{r}\right)<1.
$$
Since $u\in L^\infty((0,\infty),L^q)$,
for $\widetilde{T}>T$, 
we know that  $|u|^p\in L^\infty((T,\widetilde{T}),L^{\frac{q}{p}})$, 
and it easily follows that $J_2\in C([T,\widetilde{T}],L^r)$. By the arbitrariness of $\widetilde{T}$, it  holds $J_2\in C([T,\infty),L^r)$.
Since the terms $e^{t\Delta}u_0$, $\int_0^t s^\sigma e^{(t-s)\Delta}w\,ds$ 
and $J_1$ belong to $C([T,\infty),C_0(\mathbb{R}^N))\cap C([T,\infty),L^q)$,
we deduce that $u\in C([T,\infty), L^r)$. Iterating this process a finite number of times, we obtain
\begin{equation}
\label{eq:4.12}
u\in C([T,\infty),C_0(\mathbb{R}^N)).
\end{equation}
Hence, \eqref{eq:4.10} follows from \eqref{eq:4.11} and \eqref{eq:4.12},
and the proof of the assertion~(ii) of Theorem \ref{Theorem:1.1} is complete.
\end{proof}

\vspace{1cm}
\noindent {\bf Acknowledgements.} 
The first author is supported by Researchers Supporting Project
number (RSP-2019/57), King Saud University, Riyadh, Saudi Arabia.
The second author  was also supported in part by the Grant-in-Aid for Young Scientists (B)
(No.~16K17629) and the Grant-in-Aid for Scientific Research (S)(No.~19H05599)
from Japan Society for the Promotion of Science.

\vskip 3mm

\font\tenrm=cmr10
{\tenrm
\noindent Mohamed Jleli,
Department of Mathematics, College of Science, King Saud University, P.O. Box 2455, Riyadh, 11451, Saudi Arabia. 
E-mail: jleli@ksu.edu.sa
\vskip 1.5mm

\noindent Tatsuki Kawakami,
 Department of Applied Mathematics and Informatics, Ryukoku University, Seta, Otsu, Japan.
E-mail: kawakami@math.ryukoku.ac.jp
\vskip 1.5mm

\noindent Bessem Samet,
Department of Mathematics, College of Science, King Saud University, P.O. Box 2455, Riyadh, 11451, Saudi Arabia. 
E-mail: bsamet@ksu.edu.sa

}

\end{document}